\documentclass{article}
\usepackage[parfill]{parskip}
\usepackage[margin=1in]{geometry}
\usepackage{comment}
\usepackage{amsmath,amssymb,amsthm,mathtools,microtype}
\usepackage{hyperref}
\usepackage[nameinlink,noabbrev]{cleveref}
\usepackage{algorithm}
\usepackage[noend]{algpseudocode}

\newtheorem{theorem}{Theorem}[section]
\newtheorem{lemma}[theorem]{Lemma}

\newcommand{\R}{\mathbb{R}}
\newcommand{\tr}{\operatorname{tr}}
\newcommand{\dist}{\operatorname{dist}}
\newcommand{\argmin}{\operatorname*{argmin}}

\newcommand{\gap}{\Delta}
\newcommand{\pot}{\Psi}

\title{On the Complexity of BFGS Method for Smooth Convex Optimization}
\author{Lijun Ding\thanks{University of California San Diego, Department of Mathematics (\texttt{l2ding@ucsd.edu}).} \and Jinwen Yang\thanks{University of Chicago, Department of Statistics (\texttt{jinweny@uchicago.edu}).}\and 
Baoyu Zhou\thanks{Arizona State University, School of Computing and Augmented Intelligence (\texttt{baoyu.zhou@asu.edu}).} 
}
\date{}

\begin{document}
\maketitle

\begin{abstract}
We study the BFGS method with an Armijo-Wolfe line search for minimizing convex functions with Lipschitz-continuous gradients, without assuming strong convexity. We establish a global iteration complexity bound of $\mathcal{O}(k^{-1/2})$ for the smallest gradient norm among the first $k$ iterates. Moreover, when the initial sublevel set is bounded, we show that the function value gap converges at a rate of $\mathcal{O}(k^{-1})$. Our analysis leverages the classical trace-log-determinant
potential function of~\cite[Equation~(2.5)]{byrd1989tool} and reveals that a key inequality underlying this potential function remains valid without strong convexity.
\end{abstract}

\section{Introduction}

Broyden-Fletcher-Goldfarb-Shanno (BFGS), proposed in \cite{broyden1970convergence,fletcher1970new,goldfarb1970family,shanno1970conditioning}, is a popular quasi-Newton method due to its ability to adapt to local curvature while maintaining a relatively low per-iteration cost compared with Newton's method. 
For twice continuously differentiable convex functions without strong convexity, classical global convergence results for BFGS are primarily qualitative~\cite{byrd1987global,powell1976algorithm} and do not provide explicit global convergence rates.
In the strongly convex setting, classical results establish asymptotic superlinear convergence~\cite{byrd1989tool,nocedal2006numerical}, while explicit global and local convergence rates for BFGS with suitable line searches have become available more recently; see, e.g.,
\cite{jin2024non,jin2023non,jin2025non,rodomanov2022rates}. 
In contrast, global convergence rates for standard BFGS on smooth convex functions without strong convexity remain largely unexplored.

This note presents iteration complexity guarantees for BFGS with an Armijo-Wolfe line search when minimizing a \emph{convex}, \emph{$L$-smooth} (i.e., the gradient is $L$-Lipschitz continuous), and \emph{bounded-below} function $f:\mathbb{R}^d \rightarrow \mathbb{R}$, starting from an arbitrary initial point.  
Specifically, for any iteration $k$, we show that the iterates $\{x_j\}_{0\leq j\leq k}$ generated by BFGS satisfy the best-iterate gradient bound: $\min_{0\leq j\leq k} \|\nabla f(x_j)\| \leq \mathcal{O}(k^{-1/2})$. 
Under the additional assumption that the initial sublevel set is bounded, i.e., $\sup_{x\in\mathbb{R}^d:f(x)\leq f(x_0)}\dist(x,X^\star) <\infty$ with  $X^\star:=\argmin_{x\in\mathbb{R}^d} f(x)$, we further establish the convergence of function values satisfying $f(x_k) - f^\star \leq \mathcal{O}(k^{-1})$, where $f^\star := \min_{x\in\mathbb{R}^d}f(x)$.
Our proof builds on the classical trace-log-determinant potential function in~\cite[Equation~(2.5)]{byrd1989tool}. A key observation, formalized in~Lemma \ref{lem:potential}, is that a critical inequality associated with this potential function does not require strong convexity. This allows the classical analysis to be extended to the smooth convex setting and yields the stated global complexity bounds.

\section{Algorithm setup and key lemmas}

We consider to use the BFGS method to solve optimization problems of the form
\begin{equation}\label{eq:prob}
\min_{x\in\mathbb{R}^d}\ f(x),
\end{equation}
where $f:\R^d\to\R$ is continuous differentiable and convex, and $\nabla f:\R^d\to\R^d$ is $L$-Lipschitz continuous with constant $L>0$. 
The BFGS algorithm is described as follows.
\begin{algorithm}
\caption{BFGS Method with a Armijo-Wolfe Line Search}
\begin{algorithmic}[1]
\Require Initial iterate $x_0 \in \mathbb{R}^d$; initial Hessian approximation positive definite matrix $B_0\succ 0$; user-defined parameters $\{c_1,c_2\}$ satisfying $0<c_1<c_2<1$ 
\For{$k=0,1,2,\ldots$}
\State Evaluate gradient $\nabla f(x_k)$
\State \textbf{if} $\|\nabla f(x_k)\| = 0$, \textbf{then} terminate
\State Compute search direction $p_k = -B_k^{-1}\nabla f(x_k)$
\State Identify a suitable step size $\alpha_k > 0$ satisfying the Armijo-Wolfe line-search condition:
\begin{align}
 & f(x_k + \alpha_k p_k) \le f(x_k)+c_1\alpha_k\nabla f(x_k)^\top p_k,   \label{eq: amijo_old} \tag{A} \\
\text{and} \quad & \nabla f(x_k + \alpha_kp_k)^\top p_k \ge c_2\nabla f(x_k)^\top p_k. \label{eq: wolfe_old}  \tag{W}
\end{align}
\State Update iterate $x_{k+1} = x_k + \alpha_k p_k$
\State Evaluate iterate displacement $s_k = x_{k+1} - x_k$ and gradient displacement $y_k = \nabla f(x_{k+1}) - \nabla f(x_k)$
\State Update Hessian approximation matrix 
\begin{equation*}
B_{k+1} = B_k-\frac{B_ks_ks_k^\top B_k}{s_k^\top B_ks_k}+\frac{y_ky_k^\top}{y_k^\top s_k}
\end{equation*}
\EndFor
\end{algorithmic}
\label{algo}
\end{algorithm}

For any iteration $k$, let $g_k:=\nabla f(x_k)$. By the use of $x_{k+1} = x_k + s_k = x_k + \alpha_kp_k$, the Armijo-Wolfe conditions described in Algorithm \ref{algo} are equivalent to
\begin{align}
 &f(x_{k+1}) \le f(x_k)+c_1g_k^\top s_k,   \label{eq: amijo} \tag{A}\\
\text{and} \quad &g_{k+1}^\top s_k \ge c_2g_k^\top s_k. \label{eq: wolfe}  \tag{W}
\end{align}
Let $f^\star:=\inf_{x\in\mathbb{R}^d}f(x)$ denote the optimal function value of~\eqref{eq:prob}, by defining $\Delta_k := f(x_k) - f^\star$ as the optimal function value gap, we have the following technical result hold.

\begin{lemma}[Standard one-step estimates]\label{lem:onestep}
Before termination, it holds for any $k\geq 0$ that $B_k\succ0$, $g_k^\top s_k<0$,
\begin{align}
 y_k^\top s_k&\ge(1-c_2)(-g_k^\top s_k)>0,                 \label{eq:curv}\\
 \text{and} \quad \|y_k\|^2&\le L y_k^\top s_k.                            \label{eq:coco}
\end{align}
Consequently, with $\gamma:=c_1(1-c_2)$ and
\begin{equation}
 r_k:=\frac{L(g_k^\top s_k)^2}
 {\|g_k\|^2y_k^\top s_k},                                \label{eq:rdef}
\end{equation}
one has
\begin{equation}
 \gap_k-\gap_{k+1}\ge \frac{\gamma}{L}r_k\|g_k\|^2.
                                                                    \label{eq:descent}
\end{equation}
\end{lemma}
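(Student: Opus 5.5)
The plan is an induction on $k$ that carries positive definiteness of $B_k$ forward, with each remaining claim read off from the line-search conditions and standard smooth-convex inequalities.

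\emph{Positive definiteness and descent.} Suppose $B_k\succ0$ and $g_k\neq 0$ (no termination). Then
\[
g_k^\top s_k=-\alpha_k g_k^\top B_k^{-1}g_k<0,
\]
since $\alpha_k>0$.

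\emph{Curvature bound \eqref{eq:curv}.} Subtract $g_k^\top s_k$ from both sides of \eqref{eq: wolfe}:
\[
y_k^\top s_k=g_{k+1}^\top s_k-g_k^\top s_k\ge (c_2-1)g_k^\top s_k=(1-c_2)(-g_k^\top s_k)>0.
\]

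\emph{Inheritance of $B_{k+1}\succ0$.} This is the classical BFGS argument. For any $v$, write
\[
v^\top B_{k+1}v=\Big(v^\top B_kv-\frac{(v^\top B_ks_k)^2}{s_k^\top B_ks_k}\Big)+\frac{(y_k^\top v)^2}{y_k^\top s_k}.
\]
The first term is $\ge0$ by Cauchy--Schwarz in the $B_k$ inner product, with equality only if $v$ is parallel to $s_k$. For $v=ts_k$ with $t\neq0$, the second term equals $t^2 y_k^\top s_k>0$. So $B_{k+1}\succ0$, and the induction closes. The base case is $B_0\succ0$ by assumption.

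\emph{Co-coercivity \eqref{eq:coco}.} For a convex function with $L$-Lipschitz gradient, the standard inequality (Baillon--Haddad, or Nesterov's Theorem 2.1.5)
\[
\langle\nabla f(x)-\nabla f(y),x-y\rangle\ge \tfrac1L\|\nabla f(x)-\nabla f(y)\|^2
\]
gives $\|y_k\|^2\le L\,y_k^\top s_k$ directly. It is the only place convexity enters, and I would just cite it.

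\emph{Descent bound \eqref{eq:descent}.} Armijo \eqref{eq: amijo} gives
\[
\gap_k-\gap_{k+1}\ge c_1(-g_k^\top s_k).
\]
Rewrite $-g_k^\top s_k=\frac{(g_k^\top s_k)^2}{-g_k^\top s_k}$. By \eqref{eq:curv}, $-g_k^\top s_k\le y_k^\top s_k/(1-c_2)$, hence
\[
\gap_k-\gap_{k+1}\ge c_1(1-c_2)\frac{(g_k^\top s_k)^2}{y_k^\top s_k}=\frac{\gamma}{L}\,r_k\|g_k\|^2,
\]
by the definition \eqref{eq:rdef} of $r_k$.

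Every step here is routine. The only potential subtlety is justifying co-coercivity without twice-differentiability, and that is handled by the cited standard lemma.
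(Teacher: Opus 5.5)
Your proof is correct and follows the paper's argument essentially step for step. Wolfe yields \eqref{eq:curv}, co-coercivity (cited from Nesterov) yields \eqref{eq:coco}, and Armijo together with $-g_k^\top s_k\ge(1-c_2)(g_k^\top s_k)^2/y_k^\top s_k$ gives \eqref{eq:descent}; the only difference is that you write out the classical induction showing $B_{k+1}\succ0$, which the paper cites from Nocedal--Wright.
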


\begin{proof}
Positive definiteness of $B_k$ makes $p_k$ a descent direction.  The Wolfe condition \eqref{eq: wolfe} gives
\eqref{eq:curv}, and the BFGS update then preserves positive definiteness of $B_k^{-1}$ and $B_k$
\cite[Sec.~6.1, pp. 141]{nocedal2006numerical}.  Equation \eqref{eq:coco} is a quick consequence of $L$-smoothness \cite[Theorem~2.1.5, Eq.~(2.1.11)]{Nesterov2018}; the
line-search ratios in \eqref{eq:rdef} also appear as $\frac{\cos(\hat{\theta}_k)^2}{\hat{m}_k}$ in \cite[Remark 1 and Proposition 2]{jin2025non}.
Finally,
Equations \eqref{eq:curv} and \eqref{eq:rdef} give
$-g_k^\top s_k\ge(1-c_2)r_k\|g_k\|^2/L$, and the Armijo condition \eqref{eq: amijo} gives
\eqref{eq:descent}.
\end{proof}

Define the classical BFGS trace-log-determinant potential function with a scale $L$ and shift $d$ for any symmetric positive definite $B \in \mathbb{R}^{d\times d}$:
\begin{equation}
 \pot_L(B):=\tr(B/L)-d-\log\det(B/L) \overset{(a)}{\ge} 0.                  \label{eq:potential}
\end{equation}
The inequality $(a)$ is not difficult to establish and we omit the details; see also \cite[Exercises 6.8 and 6.9]{nocedal2006numerical} and \cite[discussion after Eq. (18)]{jin2025non}.
The following result appears as a quick consequence of 
\cite[Eq.~(6.50)]{nocedal2006numerical}; see also
\cite{byrd1989tool}. A critical observation is that the following result \emph{does not} require strong convexity. We include a short derivation here for self-completeness.

\begin{lemma}[BFGS potential inequality]\label{lem:potential}
For every $k\ge1$ before termination,
\begin{equation}
 \sum_{i=0}^{k-1}\log r_i
 \ge-\pot_L(B_0)+\sum_{i=0}^{k-1}
 \left(1-\frac{\|y_i\|^2}{L y_i^\top s_i}\right)
 \ge-\pot_L(B_0).                                        \label{eq:product}
\end{equation}
In particular, $\prod_{i<k}r_i\ge e^{-\pot_L(B_0)}$.
\end{lemma}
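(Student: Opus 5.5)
We track how $\pot_L$ changes under one BFGS update and telescope. Let $\tilde B_k := B_k/L$, and note that the update is invariant under this scaling:
\[
\tilde B_{k+1}=\tilde B_k-\frac{\tilde B_k s_ks_k^\top\tilde B_k}{s_k^\top\tilde B_k s_k}+\frac{y_ky_k^\top}{L\,y_k^\top s_k}.
\]
We need two standard identities, both from \cite[Sec.~6.4]{nocedal2006numerical}:
\[
\tr(\tilde B_{k+1})=\tr(\tilde B_k)-\frac{\|\tilde B_k s_k\|^2}{s_k^\top\tilde B_k s_k}+\frac{\|y_k\|^2}{L\,y_k^\top s_k},
\]
\[
\det(\tilde B_{k+1})=\det(\tilde B_k)\,\frac{y_k^\top s_k}{L\,s_k^\top\tilde B_k s_k}.
\]
The trace identity is immediate. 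The determinant identity follows from the matrix determinant lemma applied twice, or from the usual orthogonal-basis argument. Lemma~\ref{lem:onestep} guarantees $B_k\succ0$ and $y_k^\top s_k>0$, so all the quantities above are well defined and positive.

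Introduce the shorthand
\[
m_k:=\frac{y_k^\top s_k}{s_k^\top B_k s_k},\qquad M_k:=\frac{\|y_k\|^2}{y_k^\top s_k},\qquad q_k:=\frac{\|B_ks_k\|^2}{s_k^\top B_k s_k}.
\]
Subtracting the logarithm of the determinant identity from the trace identity gives
\[
\pot_L(B_{k+1})=\pot_L(B_k)+\frac{M_k}{L}-\frac{q_k}{L}-\log m_k .
\]
Since $s_k=-\alpha_k B_k^{-1}g_k$, we have $B_ks_k=-\alpha_k g_k$. Hence $s_k^\top B_k s_k=-\alpha_k g_k^\top s_k$ and $\|B_ks_k\|^2=\alpha_k^2\|g_k\|^2$. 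Substituting,
\[
\frac{q_k}{L}=\frac{\alpha_k\|g_k\|^2}{L(-g_k^\top s_k)},\qquad m_k=\frac{y_k^\top s_k}{\alpha_k(-g_k^\top s_k)},
\]
and the $\alpha_k$ factors combine so that
\[
\frac{q_k}{L}\cdot m_k=\frac{\|g_k\|^2\,y_k^\top s_k}{L(g_k^\top s_k)^2}=\frac1{r_k}.
\]

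The key step is then an elementary bound. Using $\log t\le t-1$ with $t=q_k/L$,
\[
-\frac{q_k}{L}-\log m_k=-\frac{q_k}{L}+\log\frac{q_k}{L}+\log r_k\le -1+\log r_k .
\]
Therefore
\[
\pot_L(B_{k+1})\le\pot_L(B_k)+\frac{M_k}{L}-1+\log r_k .
\]
Summing this over $i=0,\dots,k-1$ and using $\pot_L(B_k)\ge0$ from \eqref{eq:potential} yields
\[
0\le \pot_L(B_0)+\sum_{i<k}\Bigl(\frac{\|y_i\|^2}{Ly_i^\top s_i}-1\Bigr)+\sum_{i<k}\log r_i,
\]
which is the first inequality in \eqref{eq:product}. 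The second inequality follows because \eqref{eq:coco} makes every summand $1-\|y_i\|^2/(Ly_i^\top s_i)$ nonnegative. Exponentiating gives the product bound $\prod_{i<k}r_i\ge e^{-\pot_L(B_0)}$.

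The only point requiring care, and where strong convexity would classically enter, is bounding the term $M_k$. Here it is handled solely by co-coercivity \eqref{eq:coco}, which holds for convex $L$-smooth $f$. The previously troublesome $-\log m_k$ term, which strong convexity used to control, is absorbed into $\log r_k$ together with $q_k$, so no lower curvature bound is needed.
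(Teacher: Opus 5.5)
Your proof is correct and follows the paper's argument essentially step for step. Your $M_k/L$, $q_k/L$, $m_k$ are exactly the paper's $u$, $b$, $\zeta$, and you use the same chain: the identity $\pot_L(B_{k+1})-\pot_L(B_k)=u-b-\log\zeta$, the relation $b\zeta=1/r_k$, the bound $\log b-b\le-1$, then telescoping with $\pot_L(B_k)\ge0$ and co-coercivity. The only differences are presentational: you work with the rescaled $\tilde B_k=B_k/L$, and you write out the $\alpha_k$ cancellation explicitly.
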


\begin{proof}

For one BFGS step, suppress the index and define 
\[
 u:=\frac{\|y\|^2}{Ly^\top s},\qquad
 b:=\frac{\|Bs\|^2}{Ls^\top Bs},\qquad \text{and} \qquad
 \zeta:=\frac{y^\top s}{s^\top Bs}.
\]
Also denote $B_+ = B_{k+1}$. Using properties of the trace and determinant, we have:
\begin{equation}
 \pot_L(B_+)-\pot_L(B)=u-b-\log\zeta.                     \label{eq:potrec}
\end{equation}
Indeed, the trace change is $u-b$, while
$\det(B_+)/\det(B)=\zeta$; see also \cite[Exercises 6.10 and 6.11]{nocedal2006numerical}.  Since $Bs=-\alpha g$,
\[
 b\zeta=
 \frac{\|Bs\|^2y^\top s}{L(s^\top Bs)^2}
 =\frac{\|g\|^2y^\top s}{L(g^\top s)^2}=\frac1r.
\]
Replacing $\zeta$ in \eqref{eq:potrec} by $\frac{1}{rb}$, followed by
$\log b-b\le-1$, yields
\[
 \pot_L(B_+)-\pot_L(B)
 =u-b+\log b+\log r\le u-1+\log r.
\]
Summing the above from $i=0$ to $i= k-1$ and using $u\le1$ from Equation \eqref{eq:coco} and
$\pot_L(B_k)\ge0$, we proves Equation \eqref{eq:product}.
\end{proof}

\section{Best-gradient and function-gap complexity}
Now we are ready to show the iteration complexity result for global convergence of the BFGS algorithm.
\begin{theorem}\label{thm:main}
Consider the BFGS method with Armijo-Wolfe line search for minimizing a convex, $L$-smooth function $f$ that is bounded below. Let $\pot_0=\pot_L(B_0)$ and denote function gap $\Delta_k := f(x_k) - f^\star$ for any $k\geq 0$. For any $k$, if some iterate $x_j$, $0\le j\le k$, is optimal, then BFGS
terminates.  Otherwise,
\begin{equation}
 \min_{0\le i<k}\|g_i\|^2
 \le \frac{L\gap_0}{\gamma k}\exp\!\left(\frac{\pot_0}{k}\right).
                                                                    \label{eq:grad}
\end{equation}
Moreover, if $X^\star$, the optimal solution set of $f$, is non-empty and $W_0:=\sup_{\substack{x\notin X^\star:f(x)\le f(x_0)}}
 \frac{f(x)-f^\star}{\|\nabla f(x)\|}<\infty$, then
\begin{equation}
 \gap_k\le
 \left[\frac1{\gap_0}+\frac{\gamma k}{LW_0^2}
 e^{-\pot_0/k}\right]^{-1}.                              \label{eq:gap}
\end{equation}
In particular, if 
additionally $B_0=LI$, \eqref{eq:gap} implies 
\begin{equation}\label{eq: gk_Deltak_1_over_k_bound}
\frac{\|g_k\|^2}{2L} \leq \Delta_k \leq \frac{LW_0^2}{\gamma k}\leq \frac{LR_0^2}{\gamma k}, 
\end{equation}
where $R_0 :\,= \sup_{x:f(x)\leq f(x_0)} \dist(x,X^*)$.
\end{theorem}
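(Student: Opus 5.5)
The plan is to combine the per-step descent estimate \eqref{eq:descent} of Lemma~\ref{lem:onestep} with the product lower bound $\prod_{i<k} r_i \ge e^{-\pot_0}$ of Lemma~\ref{lem:potential}, using the AM--GM inequality to pass from a product to a sum. The termination claim is immediate: by convexity, an optimal $x_j$ has $g_j=0$, so the algorithm stops at step $j$.

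For \eqref{eq:grad}, I will sum \eqref{eq:descent} over $i=0,\dots,k-1$ and use $\gap_k\ge 0$:
\[
\frac{L\gap_0}{\gamma}\ \ge\ \sum_{i<k} r_i\|g_i\|^2\ \ge\ \Big(\min_{i<k}\|g_i\|^2\Big)\sum_{i<k} r_i .
\]
By AM--GM, $\sum_{i<k} r_i\ge k\big(\prod_{i<k} r_i\big)^{1/k}\ge k e^{-\pot_0/k}$. Rearranging gives \eqref{eq:grad}.

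For \eqref{eq:gap}, I will use that Armijo makes $\gap_i$ nonincreasing, so every iterate lies in the initial sublevel set. Hence $\|g_i\|\ge \gap_i/W_0$ for nonoptimal iterates. Substituting into \eqref{eq:descent} gives
\[
\gap_i-\gap_{i+1}\ \ge\ \frac{\gamma}{LW_0^2}\, r_i\gap_i^2\ \ge\ \frac{\gamma}{LW_0^2}\, r_i\gap_i\gap_{i+1}.
\]
Dividing by $\gap_i\gap_{i+1}>0$ yields $1/\gap_{i+1}-1/\gap_i\ge \frac{\gamma}{LW_0^2} r_i$. Telescoping gives
\[
\frac{1}{\gap_k}\ \ge\ \frac{1}{\gap_0}+\frac{\gamma}{LW_0^2}\sum_{i<k} r_i ,
\]
and AM--GM with Lemma~\ref{lem:potential} again gives $\sum_{i<k} r_i\ge k e^{-\pot_0/k}$. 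This is \eqref{eq:gap}. The only delicate point is the division step, which needs $\gap_{i+1}>0$. If some $\gap_{i+1}=0$, the method has terminated and the bound holds trivially.

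For \eqref{eq: gk_Deltak_1_over_k_bound}, when $B_0=LI$ we have $\pot_0=\tr(I)-d-\log\det I=0$, so $e^{-\pot_0/k}=1$. Dropping $1/\gap_0$ gives $\gap_k\le LW_0^2/(\gamma k)$. The left inequality $\|g_k\|^2/(2L)\le \gap_k$ is the standard $L$-smooth estimate $f^\star\le f(x-g/L)\le f(x)-\|g\|^2/(2L)$. For $W_0\le R_0$, convexity gives $f(x)-f^\star\le g^\top(x-\bar x)\le \|g\|\dist(x,X^\star)$, where $\bar x$ is the projection of $x$ onto $X^\star$; this bounds each ratio defining $W_0$ by $R_0$.

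I expect no real obstacle, since the heavy lifting is in Lemma~\ref{lem:potential}. The main thing to check is that AM--GM is applied to the positive quantities $r_i$, which is valid because $r_i>0$ before termination by \eqref{eq:curv}.
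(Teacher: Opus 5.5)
Your proof is correct and follows the paper's argument: sum the descent estimate \eqref{eq:descent}, lower-bound $\sum_{i<k} r_i$ via AM--GM and Lemma~\ref{lem:potential}, and for the gap bound telescope $1/\gap_{i+1}-1/\gap_i$ using $\gap_i^2\ge\gap_i\gap_{i+1}$ and $w_i\le W_0$. The only cosmetic difference is the order of two steps. The paper applies AM--GM to $r_i/w_i^2$ before invoking $w_i\le W_0$, which gives the slightly sharper intermediate bound \eqref{eq:gapapost} in terms of the geometric mean of the $w_i$; you substitute $W_0$ first, and the final result is the same.
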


\begin{proof}
Summing \eqref{eq:descent} and using the Arithmetic Mean–Geometric Mean inequality (AM-GM) with
\eqref{eq:product} gives
\[
 \gap_0\ge\frac{\gamma}{L}\sum_{0}^{k-1}r_i\|g_i\|^2
 \ge\frac{\gamma k}{L}e^{-\pot_0/k}\min_{0\leq i<k}\|g_i\|^2,
\]
which proves \eqref{eq:grad}. 

Let $w_i:=\gap_i/\|g_i\|$ and
$\bar w_k=(\prod_{i<k}w_i)^{1/k}$. From \eqref{eq:descent} in Lemma \ref{lem:onestep}, and that $\Delta_i \ge \Delta_{i+1}$ due to the Amijo condition \eqref{eq: amijo}, we have 
\[
 \frac1{\gap_{i+1}}-\frac1{\gap_i}
 =\frac{\gap_i-\gap_{i+1}}{\gap_i\gap_{i+1}} \ge 
 \frac{\gap_i-\gap_{i+1}}{\gap_i^2}
 \ge\frac{\gamma}{L}\frac{r_i}{w_i^2}.
\]
Telescoping and applying AM-GM once more proves
\begin{equation}
 \frac1{\gap_k}\ge\frac1{\gap_0}
 +\frac{\gamma k}{L(\prod_{i<k}w_i)^{2/k}}
   \exp\!\left(-\frac{\pot_0}{k}\right).                 \label{eq:gapapost}
\end{equation}
The definition of $W_0$ and the descent property of the Amijo condition \eqref{eq: amijo} imply that $w_i\le W_0$. Thus, we reach \eqref{eq:gap}. Finally, combining (i) $W_0\le R_0$ as the convexity gives
$\gap_i\le\|g_i\|\dist(x_i,X^\star)$, (ii) the $L$-smoothness gives $\|g_k\|^2 \leq 2L \Delta_k$ \cite[Theorem 2.1.5, Eq. (2.1.10)]{Nesterov2018}, and (iii) $\Psi_0 = 0$ for $B = LI$, we reach \eqref{eq: gk_Deltak_1_over_k_bound}.
\end{proof}

\section{Conclusion and open problems}

The BFGS potential yields an unconditional $\mathcal{O}(k^{-1/2})$ best-gradient guarantee and an
$\mathcal{O}(k^{-1})$ function-gap rate under the gap-to-gradient bound $W_0<\infty$ or the boundedness on initial sublevel set, i.e., $R_0<\infty.$

Two questions remain open.  First, under only convexity, global
$L$-smoothness, and $X^\star\ne\varnothing$, must every full-BFGS
Armijo-weak-Wolfe sequence satisfy $f(x_k)\to f^\star$, or can one construct a
single smooth convex counterexample?  Second, can one prove a function-gap rate
depending only on the initial distance $\dist(x_0,X^\star)$, rather than on the boundedness quantity of the initial sublevel set $R_0$?

\section*{Acknowledgments} The authors recognize that the main ingredients are already present in the literature. 
The proof was first obtained by Gemini 3.1 pro, while GPT 5.6 Sol was later used to produce a much simpler proof by making the core observation that Lemma \ref{lem:potential} does not require strong convexity. The authors take full responsibility for the final version.

\bibliographystyle{amsplain}
\bibliography{references}

@article{byrd1989tool,
  title={A tool for the analysis of quasi-Newton methods with application to unconstrained minimization},
  author={Byrd, Richard H and Nocedal, Jorge},
  journal={SIAM Journal on Numerical Analysis},
  volume={26},
  number={3},
  pages={727--739},
  year={1989},
  publisher={SIAM}
}

@article{jin2025non,
  title={Non-asymptotic global convergence rates of {BFGS} with exact line search},
  author={Jin, Qiujiang and Jiang, Ruichen and Mokhtari, Aryan},
  journal={Mathematical Programming},
  pages={1--38},
  year={2025},
  publisher={Springer}
}

@article{jin2024non,
  title={Non-asymptotic global convergence analysis of {BFGS} with the {Armijo-Wolfe} line search},
  author={Jin, Qiujiang and Jiang, Ruichen and Mokhtari, Aryan},
  journal={Advances in Neural Information Processing Systems},
  volume={37},
  pages={16810--16851},
  year={2024}
}

@book{Nesterov2018,
  author    = {Yurii Nesterov},
  title     = {Lectures on Convex Optimization},
  edition   = {Second},
  series    = {Springer Optimization and Its Applications},
  volume    = {137},
  publisher = {Springer},
  address   = {Cham},
  year      = {2018},
  doi       = {10.1007/978-3-319-91578-4}
}

@book{nocedal2006numerical,
  title={Numerical optimization},
  author={Nocedal, Jorge and Wright, Stephen J},
  year={2006},
  publisher={Springer}
}

@article{powell1976algorithm,
  title={Algorithm for minimization without exact line searches},
  author={Powell, MJD},
  journal={Nonlinear programming},
  volume={9},
  pages={53--72},
  year={1976},
  publisher={American Mathematical Soc.}
}

@article{broyden1970convergence,
  title={The convergence of a class of double-rank minimization algorithms 1. General considerations},
  author={Broyden, Charles George},
  journal={IMA Journal of Applied Mathematics},
  volume={6},
  number={1},
  pages={76--90},
  year={1970},
  publisher={Oxford University Press}
}

@article{fletcher1970new,
  title={A new approach to variable metric algorithms},
  author={Fletcher, Roger},
  journal={The computer journal},
  volume={13},
  number={3},
  pages={317--322},
  year={1970},
  publisher={Oxford University Press}
}

@article{goldfarb1970family,
  title={A family of variable-metric methods derived by variational means},
  author={Goldfarb, Donald},
  journal={Mathematics of computation},
  volume={24},
  number={109},
  pages={23--26},
  year={1970}
}

@article{shanno1970conditioning,
  title={Conditioning of quasi-Newton methods for function minimization},
  author={Shanno, David F},
  journal={Mathematics of computation},
  volume={24},
  number={111},
  pages={647--656},
  year={1970}
}

@article{byrd1987global,
  title={Global convergence of a cass of quasi-Newton methods on convex problems},
  author={Byrd, Richard H and Nocedal, Jorge and Yuan, Ya-Xiang},
  journal={SIAM Journal on Numerical Analysis},
  volume={24},
  number={5},
  pages={1171--1190},
  year={1987},
  publisher={SIAM}
}

@article{rodomanov2022rates,
  title={Rates of superlinear convergence for classical quasi-Newton methods},
  author={Rodomanov, Anton and Nesterov, Yurii},
  journal={Mathematical Programming},
  volume={194},
  number={1},
  pages={159--190},
  year={2022},
  publisher={Springer}
}

@article{jin2023non,
  title={Non-asymptotic superlinear convergence of standard {quasi-Newton} methods},
  author={Jin, Qiujiang and Mokhtari, Aryan},
  journal={Mathematical Programming},
  volume={200},
  number={1},
  pages={425--473},
  year={2023},
  publisher={Springer}
}

\end{document}